\renewcommand*\env@matrix[1][*\c@MaxMatrixCols c]{%
  \hskip -\arraycolsep
  \let\@ifnextchar\new@ifnextchar
  \array{#1}}
\newcommand{\val}{{\mathrm{val}}}
\newcommand{\p}{{ \mathfrak{p} }}
\newcommand{\Deltam}{{ \Delta_\mathrm{min} }}
\newcommand{\vm}{{ v_\mathrm{min} }}
\newcommand{\FF}{\mathbb{F}}
\newcommand{\QQ}{\mathbb{Q}}
\newcommand{\cO}{\mathcal{O}}
\newtheorem{thm}{\bf{Theorem}}
\newtheorem{theorem}{\bf{Theorem}}[section]
\newtheorem{corollary}[theorem]{\bf{Corollary}}
\newtheorem{proposition}[theorem]{\bf{Proposition}}
\newtheorem{lemma}[theorem]{\bf{Lemma}}
\theoremstyle{definition}
\newtheorem{remark}[theorem]{\bf{Remark}}
\title{On a local invariant of elliptic curves with a $p$-isogeny}
\author{Matthew Gealy}
\address[M.~Gealy]{Center for Communications Research, 4320 Westerra Court, San Diego, CA 92121}
\email{mtgealy@ccrwest.org}
\author{Zev Klagsbrun}
\address[Z.~Klagsbrun]{Center for Communications Research, 4320 Westerra Court, San Diego, CA 92121}
\email{zdklags@ccrwest.org}
\begin{document}
\bibliographystyle{alpha}

\begin{abstract}
An elliptic curve $E$ defined over a $p$-adic field $K$ with a $p$-isogeny $\phi:E\rightarrow E^\prime$ comes equipped with an invariant $\alpha_{\phi/K}$ that measures the valuation of the leading term of the formal group homomorphism 
%An elliptic curve $E$ with a $p$-isogeny $\phi:E\rightarrow E^\prime$ defined over a
%local field $K$ having residual characteristic $p$ comes equipped with an invariant $\alpha_{\phi/K}$ that measures the valuation of the leading term of the formal group homomorphism 
$\Phi:\hat E \rightarrow \hat E^\prime$.
%$\Phi:\hat E(\mathfrak{m}_{O_K}) \rightarrow \hat E^\prime(\mathfrak{m}_{O_K})$.
We prove that if $K/\QQ_p$ 
is unramified and $E$ has additive, potentially supersingular reduction, then $\alpha_{\phi/K}$ is determined by the number of distinct geometric components on the special fibers of the minimal proper regular models of $E$ and $E^\prime$.% over $K^{ur}$.
\end{abstract}
\maketitle

\section{Introduction}

Let $K$ be a 
%local field having residual characteristic $p$ 
$p$-adic field and $E$ an elliptic curve defined over a $K$ 
%local field $K$ 
having a cyclic $p$-isogeny $\phi:E\rightarrow E^\prime$. The isogeny $\phi$ induces a homomorphism 
% on the formal groups
$\Phi:\hat E \rightarrow \hat E^\prime$ where $\hat E$ and $\hat E^\prime$ are formal groups of $E$ and $E^\prime$ constructed using minimal invariant differentials of $E$ and $E^\prime$. Well known results (see Lemma 4.2 in \cite{DD}, for example) show that $\Phi$ is given by a formal power series $\Phi(T) = a_1 T + a_2T^2 + \ldots$ where $a_1 = \frac{\phi^* \omega^\prime}{\omega}  \times u$, for some unit $u \in \cO_K^\times$.

Since minimal differentials are only unique up to units, the only information intrinsic to the curve is the valuation of $\frac{\phi^* \omega^\prime}{\omega}$. 
%
%Following \cite{DD}, we define a quantity $\alpha_{\phi/F}$ by  $\alpha_{\phi/F} = \left | \frac{\phi^* \omega^\prime}{\omega} \right |_F^{-1}$, where $| \cdot |_F$ is the normalized valuation on $F$.
Let $\alpha_{\phi/K} = \left | \frac{\phi^* \omega^\prime}{\omega} \right |_K^{-1}$, where $| \cdot |_K$ is the normalized valuation on $K$. The quantity $\alpha_{\phi/K}$ plays an important role in descent via isogeny \cite{Schaefer}
% the size of $E^\prime(K)/\phi(E(K))$ is given by \linebreak  $\alpha_{\phi/K} \cdot |E(K)[\phi]| \cdot \frac{c^\prime}{c}$, where $c$ and $c^\prime$ are the Tamagawa numbers of $E/K$ and $E^\prime/K$ respectively~\cite{Schaefer}
and also in 
%Understanding  $\alpha_{\phi/K}$ and its behavior under quadratic twist also plays a key role in 
the recent work of Bhargava, Klagsbrun, Lemke Oliver, and Shnidman on the distribution of $\phi$-Selmer groups in quadratic twist families of elliptic curves with a cyclic $3$-isogeny \cite{BKLS}.

Thanks to the work of the Dokchitsers in \cite{DD}, the behavior of $\alpha_{\phi/K}$ is well-understood when $E$ has either (potential) multiplicative or (potential) good ordinary reduction, yet little is known about the cases where $E$ has good supersingular or additive potentially supersingular reduction.

We obtain the following new result, which completely characterizes $\alpha_{\phi/K}$ in the case where $K/\QQ_p$ is unramified.

\begin{thm}
\label{thm:mainthm}
Suppose that $K/\QQ_p$ is unramified and $E/K$ has additive, potentially supersingular reduction. Let $m(E/K)$ and $m(E^\prime/K)$ be the number of distinct geometric components on the special fibers of the minimal proper regular models of $E$ and $E^\prime$ respectively. Then $m(E/K) \ne m(E^\prime/K)$ and
$$\alpha_{\phi/K} = \left \{ \begin{matrix}
1 & \text{ if } m(E/K) < m(E^\prime/K) \\
p^{\deg(K)} & \text{ if } m(E/K) > m(E^\prime/K)
\end{matrix}
\right . .
$$
Equivalently, $\vm(E/K) \ne \vm(E^\prime/K)$ and
$$\alpha_{\phi/K} = \left \{ \begin{matrix}
1 & \text{ if } \vm(E/K) < \vm(E^\prime/K)\\
p^{\deg K} & \text{ if }\vm(E/K) > \vm(E^\prime/K)
%\N_{K/\QQ_p} (p) & \text{ if }\vm(E/K) > \vm(E^\prime/K)
\end{matrix}
\right . ,
$$
where $\vm(E/K)$ and $\vm(E^\prime/K)$ are the valuations of the discriminants of minimal models of $E$ and $E^\prime$ respectively.  
\end{thm}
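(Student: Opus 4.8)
The plan is to reduce the statement to a claim about the single integer $n:=\val_K(\phi^*\omega'/\omega)$, which one shows lies in $\{0,1\}$, and then to pin down $n$ by comparing minimal differentials over $K$ with those over the field of good reduction. Two elementary reductions come first. Because $\hat\phi\circ\phi=[p]$ and $[p]^*\omega=p\,\omega$ on invariant differentials, writing $\phi^*\omega'=c\,\omega$ and $\hat\phi^*\omega=\hat c\,\omega'$ gives $c\,\hat c=p$; since $\phi,\hat\phi,[p]$ extend to the Néron models and the minimal Weierstrass differential generates the module of invariant differentials there, $c,\hat c\in\cO_K$, so $\val_K(c)+\val_K(\hat c)=\val_K(p)=1$, whence $n=\val_K(c)\in\{0,1\}$ and $\alpha_{\phi/K}=|c|_K^{-1}=p^{\,n\deg K}$. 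Second, isogenous curves have equal conductor, so Ogg's formula $\vm=f+m-1$ gives $\vm(E/K)-\vm(E'/K)=m(E/K)-m(E'/K)$; hence the two displayed forms of the theorem are equivalent and it suffices to prove the $\vm$ version.

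Next I would base change to the minimal extension $L/K$ over which $E$ --- equivalently $E'$ --- attains good reduction. Reduction type is unaffected by unramified base change, so $L/K$ is totally ramified; set $e=[L:K]\ (\geq 2)$, and note the reduction over $L$ is supersingular by hypothesis. A change of Weierstrass model with $\omega\mapsto u\,\omega$ scales the discriminant by $u^{-12}$, so comparing a minimal model of $E$ over $K$, viewed over $L$, with a minimal model of $E_L$ over $L$ and using $\val_L(\Deltam(E_L/L))=0$ gives $\val_L(\omega/\omega_L)=-e\,\vm(E/K)/12$, and likewise for $E'$. Substituting these into $\phi_L^*\omega_L'=c_L\,\omega_L$ yields
$$ n \;=\; \frac{\val_L(c_L)}{e}\;+\;\frac{\vm(E/K)-\vm(E'/K)}{12}. $$

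The crucial step is to locate $\val_L(c_L)$. Over $\cO_L$ the curves have good reduction, so $\mathcal{E}_L,\mathcal{E}_L'$ are abelian schemes and $G:=\ker\phi_L$ is a finite flat group scheme of order $p$; the exact sequence $0\to\omega_{\mathcal{E}_L'}\xrightarrow{\phi_L^*}\omega_{\mathcal{E}_L}\to\omega_G\to 0$ identifies $\val_L(c_L)$ with the length of $\omega_G$. By the Oort--Tate classification $G=\mathrm{Spec}\,\cO_L[x]/(x^p-ax)$ with $\omega_G\cong\cO_L/(a)$, while $G^\vee$ has parameter $b$ and $\val_L(a)+\val_L(b)=\val_L(p)=e$. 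Supersingularity of the reduction means $\mathcal{E}_{L,s}[p]$ is connected with connected Cartier dual, which forces $G_s\cong\alpha_p$; thus $G$ and $G^\vee$ are both connected, i.e.\ $\val_L(a),\val_L(b)\geq 1$, so $1\leq\val_L(c_L)=\val_L(a)\leq e-1$ and hence $\val_L(c_L)/e\in(0,1)$. Note one never needs the exact value of $\val_L(c_L)$, only this strict two-sided bound.

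Combining the last two steps finishes the argument: the displayed identity exhibits $n$ as an element of $(0,1)$ plus $(\vm(E/K)-\vm(E'/K))/12$, which is impossible when $\vm(E/K)=\vm(E'/K)$ since $n\in\{0,1\}$ --- proving $\vm(E/K)\neq\vm(E'/K)$, equivalently $m(E/K)\neq m(E'/K)$; if $\vm(E/K)<\vm(E'/K)$ the correction term is negative, so $0\leq n<\val_L(c_L)/e<1$ and thus $n=0$, i.e.\ $\alpha_{\phi/K}=1$; and if $\vm(E/K)>\vm(E'/K)$ the correction term is positive, so $0<\val_L(c_L)/e<n\leq 1$ and thus $n=1$, i.e.\ $\alpha_{\phi/K}=p^{\deg K}$. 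I expect the main obstacle to be the inputs of the third step: the identification of $\val_L(c_L)$ with the length of $\omega_{\ker\phi_L}$ and, above all, showing that supersingular reduction forces $\ker\phi_L$ to reduce to $\alpha_p$ rather than to an \'etale or multiplicative group scheme --- this rests on the characteristic-$p$ structure of $\mathcal{E}_{L,s}[p]$. More routine but still requiring care are the comparison of minimal Weierstrass and N\'eron differentials in the additive case and the bookkeeping when $p\in\{2,3\}$, where $L/K$ may be wildly ramified yet $\val_L(p)=e$ still holds.
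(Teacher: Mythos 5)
Your argument is correct, and its skeleton matches the paper's: you reduce to $n=\val_K(\phi^*\omega'/\omega)\in\{0,1\}$ via $c\hat c=p$ and integrality on N\'eron models (the paper's Proposition \ref{prop:sumtovalp} and Corollary \ref{cor:oneeq1}), use Ogg's formula for the equivalence of the two formulations (Remark \ref{rem:Ogg}), and base-change to a field $L$ of good reduction to get exactly the paper's identity \eqref{eq:valalphaKalphaL}. Where you genuinely diverge is the key input over $L$: you bound $\val_L(c_L)$ strictly between $0$ and $\val_L(p)$ by identifying it with the length of $\omega_{\ker\phi_L}$, invoking the Oort--Tate classification ($\omega_G\cong\cO_L/(a)$, $\val_L(a)+\val_L(b)=\val_L(p)$), and showing supersingularity forces $\ker\phi_L$ and its Cartier dual to be connected. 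The paper gets the same two-sided strict bound more cheaply: Lemma \ref{lem:sep} shows $\alpha_{\phi/L}=1$ iff the reduced isogeny $\phi_\FF$ is separable (by reducing the N\'eron differential), and since $[p]=\phi'_\FF\circ\phi_\FF$ is purely inseparable on a supersingular curve (Silverman V.3.1), neither $\phi_\FF$ nor $\phi'_\FF$ is separable (Proposition \ref{prop:ssbothbad}); combined with Proposition \ref{prop:sumtovalp} over $L$ this is precisely your $1\le\val_L(c_L)\le\val_L(p)-1$. So your route buys slightly more (the exact group-scheme-theoretic meaning of $\val_L(c_L)$ as the Oort--Tate parameter) at the cost of heavier machinery, while the paper's is elementary and also packaged so that the one-sided statement holds for ramified $K$ (its Theorem \ref{thm:ramthm}), from which the unramified theorem falls out. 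Two small points: you do not need $L/K$ totally ramified (and minimal good-reduction extensions are slightly delicate, especially for $p\in\{2,3\}$); every quantity in your computation only involves $e(L/K)$, and $\val_L(p)=e(L/K)$ already follows from $K/\QQ_p$ being unramified, so any finite $L$ of good reduction works. Also, your final case analysis tacitly needs both strict bounds, which you indeed have from connectedness of both $G_s$ and $G_s^\vee$, so no gap there.
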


\begin{remark}
\label{rem:Ogg}
The equivalence between the two statements in Theorem \ref{thm:mainthm} is a consequence of Ogg's formula (see Section IV.11.1 in \cite{ATAEC}, for example)
$$\vm(E/K) = \val_K(\mathfrak{f}_{E/K}) + m(E/K) - 1$$
where $\mathfrak{f}_{E/K}$ is the conductor of $E/K$.
Since $E/K$ and $E^\prime/K$ have the same conductor, we get $$\vm(E/K) - \vm(E^\prime/K) = m(E/K) - m(E^\prime/K).$$
\end{remark}

\begin{remark}
\label{rem:nogoodss}
As shown in Corollary \ref{cor:noss}, if $E/K$ has good supersingular reduction, then $K/\QQ_p$ 
$K$ 
must be ramified. As a result, Theorem \ref{thm:mainthm} completes the characterization of $\alpha_{\phi/K}$ begun in \cite{DD} in the case where $K/\QQ_p$ is unramified.% extension of $\QQ_p$.
\end{remark}

No part of Theorem \ref{thm:mainthm} remains true if $K/\QQ_p$ is ramified. We do however have the following partial converse. 

\begin{thm}
\label{thm:ramthm}
Suppose that $E/K$ has either good supersingular or additive potentially supersingular reduction. 
\begin{enumerate}[(i)]
\item If $\alpha_{\phi/K} = 1$, then $m(E/K) < m(E^\prime/K)$ and $\vm(E/K)  < \vm(E^\prime/K)$.
\item If $\alpha_{\phi/K} = p^{\deg K} $, then $m(E/K) > m(E^\prime/K)$ and $\vm(E/K)  > \vm(E^\prime/K)$.
\end{enumerate}
\end{thm}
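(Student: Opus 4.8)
The plan is to deduce (ii) from (i) via the dual isogeny, and to prove (i) by base changing to a field over which $E$, and hence $E'$, acquires good supersingular reduction. Write $c_{\phi/K}=\phi^*\omega'/\omega$ for the chosen minimal differentials, so that $\alpha_{\phi/K}=|c_{\phi/K}|_K^{-1}$ and, as is standard, $c_{\phi/K}\in\cO_K$ (that is, $\alpha_{\phi/K}\ge 1$). Since $\hatphi\circ\phi=[p]$ and $[p]^*\omega=p\omega$, pulling $\omega_{E/K}$ back along $\hatphi\circ\phi$ gives $c_{\phi/K}\,c_{\hatphi/K}=p$, so $\val_K(c_{\phi/K})+\val_K(c_{\hatphi/K})=\val_K(p)$ and $\alpha_{\phi/K}\cdot\alpha_{\hatphi/K}=p^{\deg K}$. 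In particular $\alpha_{\phi/K}=p^{\deg K}$ forces $\alpha_{\hatphi/K}=1$; as $\hatphi\colon E'\to E$ is again a $p$-isogeny and $E'/K$ again has good or additive potentially supersingular reduction, applying (i) to $\hatphi$ yields (ii). So it suffices to prove (i).

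Assume $\alpha_{\phi/K}=1$, that is $\val_K(c_{\phi/K})=0$, and pick a finite $L/K$ with $E_L$ of good reduction. Applying the N\'eron--Ogg--Shafarevich criterion to the $\ell$-adic Tate modules ($\ell\ne p$), which $\phi$ identifies for $E$ and $E'$, shows $E'_L$ also has good reduction; both reductions are supersingular, by hypothesis and isogeny-invariance. Comparing the $K$-minimal Weierstrass model of $E$, base changed to $L$, with the $L$-minimal model through a coordinate change with parameter $u$, the transformation laws $\Delta\mapsto u^{-12}\Delta$ and $\omega\mapsto u\,\omega$ give $\val_L(u)=\tfrac{e_{L/K}}{12}\vm(E/K)$, and likewise $\val_L(u')=\tfrac{e_{L/K}}{12}\vm(E'/K)$ for $E'$. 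Since $\phi^*\omega_{E'/K}=c_{\phi/K}\,\omega_{E/K}$ still holds over $L$, substituting $\omega_{E/K}=u^{-1}\omega_{E/L}$ and $\omega_{E'/K}=(u')^{-1}\omega_{E'/L}$ gives $c_{\phi/K}=(u/u')\,c_{\phi/L}$, hence
$$e_{L/K}\,\val_K(c_{\phi/K})=\frac{e_{L/K}}{12}\bigl(\vm(E/K)-\vm(E'/K)\bigr)+\val_L(c_{\phi/L}).$$
With the left-hand side equal to $0$ this reads $\vm(E'/K)-\vm(E/K)=\tfrac{12}{e_{L/K}}\val_L(c_{\phi/L})$, so the proof of (i) reduces to showing $\val_L(c_{\phi/L})>0$: then $\vm(E/K)<\vm(E'/K)$, and $m(E/K)<m(E'/K)$ follows from Ogg's formula as in Remark \ref{rem:Ogg}. (When $E$ already has good reduction one may take $L=K$; the same input will give $\val_K(c_{\phi/K})\ge 1$, so $\alpha_{\phi/K}\notin\{1,p^{\deg K}\}$ and both parts of the theorem hold vacuously, consistent with Corollary \ref{cor:noss}.)

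It remains to show $\val_L(c_{\phi/L})\ge 1$, equivalently that $c_{\phi/L}$ vanishes modulo the maximal ideal of $\cO_L$. Over $\cO_L$ the isogeny $\phi$ extends to a homomorphism $\mathcal{E}_L\to\mathcal{E}'_L$ of abelian schemes whose kernel is finite flat of order $p$, so its special fibre is a degree-$p$ isogeny $\tilde\phi\colon\tilde E_L\to\tilde E'_L$ of supersingular elliptic curves over the residue field. Such an isogeny is purely inseparable: were it separable, its kernel would be an \'etale group scheme of order $p$, which the supersingular curve $\tilde E_L$ does not contain (its full $p$-torsion being connected). Hence $\tilde\phi^*$ annihilates invariant differentials, and reducing $\phi^*\omega_{E'/L}=c_{\phi/L}\,\omega_{E/L}$ modulo the maximal ideal forces $c_{\phi/L}\equiv 0$, as required. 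I expect this inseparability step to be the crux: it needs both the finite flatness of the schematic closure of $\ker\phi$ inside the abelian scheme $\mathcal{E}_L$ (so that $\tilde\phi$ really has degree $p$) and the supersingular hypothesis (so that $\tilde\phi$ is forced to be inseparable); it is also the reason no full converse can hold when $K/\QQ_p$ is ramified, since then $\val_L(c_{\phi/L})\ge 1$ only accounts for a $\tfrac{1}{e_{L/K}}$ fraction of one discriminant step and $\alpha_{\phi/K}$ may lie strictly between $1$ and $p^{\deg K}$.
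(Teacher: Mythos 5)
Your proof is correct and follows essentially the same route as the paper: deduce (ii) from (i) via $\val_K(c_{\phi/K})+\val_K(c_{\hat\phi/K})=\val_K(p)$, base-change to $L$ where the reduction is good and supersingular, compare $K$-minimal and $L$-minimal differentials through the discriminant valuations to get the identical key identity, conclude $\val_L(c_{\phi/L})>0$ because the reduced isogeny is inseparable, and finish with Ogg's formula. The only cosmetic difference is the justification of inseparability: you use that a supersingular curve has connected $p$-torsion (no \'etale order-$p$ subgroup), while the paper (Proposition~\ref{prop:ssbothbad}) factors the purely inseparable map $[p]$ through $\phi_\FF$.
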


\section*{Notation}
We will use the following notation throughout this paper:
\begin{itemize}
\item $K$ will be a finite extension of $\QQ_p$.%will be a local field of residual characteristic $p$.
\item $E/K$ will be an elliptic curve defined $K$ with a rational $p$-isogeny $\phi:E \rightarrow E^\prime$. The dual isogeny will be denoted $\phi^\prime$.
\item For an extension $F/K$, 
\begin{itemize}
  \item $\p$ will be unique prime ideal of $\cO_F$.
  \item $\FF$ will be the residue class field of $\cO_F$.
  \item $e(F/K)$ will denote the ramification index of $F/K$.
  \item $\val_F$ will denote the normalized additive valuation on $F$.
  \item $E/F$ will denote the base change $E \otimes_K F$.
  \item $\omega_F$ and $\omega_F^\prime$ will denote minimal invariant differentials on $E/F$ and $E^\prime/F$ respectively.
  \item $\Deltam(E/F)$ will denote a minimal discriminant of $E/F$.
  \item $\vm(E/F)$ will denote $\val_F(\Deltam(E/F))$.
  \item $\mathcal{E}_{\cO_F}$ will denote the minimal proper regular model of $E/F$ and $\mathcal{E}_\FF$ will denote the special fiber.% of $\mathcal{E}_{\cO_F}$.
\item $m(E/F)$ will denote the number of distinct irreducible components of $\mathcal{E}_\FF$ %$\widetilde{\mathcal{E}_K}$
defined over $\overline{\FF}$.%K^{ur}$.
\end{itemize}
\end{itemize}
Theorem \ref{thm:mainthm} is proved by base-changing $E/K$ to a field $L$ over which it obtains good reduction. We therefore explicitly include the base fields in our notation to avoid any confusion, though we allow ourselves to abandon this convention when there is no ambiguity about the field.

\section{Differentials}

Let $\omega_F$ and $\omega_F^\prime$ be minimal invariant differentials --- that is, invariant differentials on minimal Weierstrass models --- of $E/F$ and $E^\prime/F$ respectively.
%
%The global differentials on $E/F$ and $E^\prime/F$ form one-dimensional $F$-vector spaces generated by $\omega_F$ and $\omega_F^\prime$ respectively. The quotient $\dfrac{\phi^* \omega_F^\prime}{\omega_F}$ measures how close the pullback of $\omega_F^\prime$ under $\phi$ comes to generating the global differentials on $E/F$.
We begin with some basic results about the quotient $\dfrac{\phi^* \omega_F^\prime}{\omega_F}$.

\begin{proposition}
\label{prop:sumtovalp}\text{ }
\begin{enumerate}[(i)]
\item Both $\dfrac{\phi^* \omega_F^\prime}{\omega_F}$ and $\dfrac{{\phi^\prime}^* \omega_F}{\omega_F^\prime}$ are in $\cO_F$.
\item We have $\val_F \left (\dfrac{\phi^* \omega_F^\prime}{\omega_F} \right ) + \val_F\left( \dfrac{{\phi^\prime}^* \omega_F}{\omega_F^\prime} \right ) = \val_F (p)$.
\end{enumerate}
\end{proposition}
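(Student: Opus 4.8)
The plan is to reduce everything to two standard facts: functoriality of pullback of differentials under composition of isogenies, and the identification of the minimal Weierstrass differential with the N\'eron differential.

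For part (ii), write $\phi^*\omega_F' = c\,\omega_F$ and ${\phi'}^*\omega_F = c'\,\omega_F'$ with $c,c'\in F$; this is possible because the space of translation-invariant differentials on an elliptic curve over $F$ is one-dimensional, and by definition $c = \phi^*\omega_F'/\omega_F$, $c' = {\phi'}^*\omega_F/\omega_F'$. Since $\phi'\circ\phi = [p]$ on $E$ and $(\phi'\circ\phi)^* = \phi^*\circ{\phi'}^*$, we get
$$[p]^*\omega_F = \phi^*\!\left({\phi'}^*\omega_F\right) = \phi^*(c'\,\omega_F') = c'\,\phi^*\omega_F' = c'c\,\omega_F.$$
On the other hand $[p]^*\omega_F = p\,\omega_F$ for any invariant differential, since $m \mapsto (\text{the scalar by which }[m]^*\text{ acts on }\omega_F)$ is a ring homomorphism $\Z \to F$ and hence is the identity. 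Comparing the two expressions gives $cc' = p$, so $\val_F(c) + \val_F(c') = \val_F(p)$, which is (ii) granted (i).

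For part (i), by the evident symmetry between $(\phi,E,E')$ and $(\phi',E',E)$ it suffices to show $c = \phi^*\omega_F'/\omega_F \in \cO_F$, i.e. that $\phi^*\omega_F'$ is an $\cO_F$-integral invariant differential on $E/F$. The minimal differential $\omega_F'$ is a N\'eron differential for $E'/F$: it generates the free rank-one $\cO_F$-module of invariant differentials on the N\'eron model $\mathcal{N}(E'/F)$ (equivalently, on the smooth locus of the minimal Weierstrass model). By the N\'eron mapping property, $\phi$ extends to a morphism $\mathcal{N}(E/F)\to\mathcal{N}(E'/F)$ of group schemes over $\cO_F$ (it is a homomorphism since it is one on the generic fibre and the N\'eron models are separated), and such a homomorphism carries the identity component into the identity component. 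Pulling back $\omega_F'$ along this extension yields an invariant differential on $\mathcal{N}(E/F)$ defined over $\cO_F$, hence an $\cO_F$-multiple of $\omega_F$; its generic fibre is $\phi^*\omega_F'$, so $\phi^*\omega_F' \in \cO_F\,\omega_F$. (Alternatively, one may invoke the integrality of the coefficients of the power series $\Phi$ from Lemma 4.2 of \cite{DD}: this gives $a_1\in\cO_F$ directly, whence $c = a_1/u \in \cO_F$.)

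The only real content is part (i), and within it the point deserving care is the assertion that $\phi^*\omega_F'$ is integral --- that is, that the isogeny genuinely extends to the N\'eron (or smooth Weierstrass) models and respects identity components, so that pullback preserves $\cO_F$-integrality of invariant differentials. Once that is in place, part (ii) is the purely formal computation above.
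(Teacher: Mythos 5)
Your proposal is correct and follows essentially the same route as the paper: part (ii) is exactly the computation $\bigl(\phi^*\omega_F'/\omega_F\bigr)\cdot\bigl({\phi'}^*\omega_F/\omega_F'\bigr)=p$ coming from $\phi'\circ\phi=[p]$ and $[p]^*\omega_F=p\,\omega_F$, followed by taking valuations, and part (i) is the integrality statement that the paper simply cites as Lemma 4.2(1) of \cite{DD}, which you prove (correctly) via the N\'eron mapping property and also note can be quoted directly. The only difference is that you supply in full the details the paper delegates to \cite{DD}.
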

\begin{proof}
Part (i) is the same as part (1) of Lemma 4.2 in \cite{DD}. To see (ii), we apply the argument from the proof of Lemma 4.3 in \cite{DD}: since $\phi^\prime \circ \phi = [p]$, we have
$
\dfrac{\phi^* \omega_F^\prime}{\omega_F}\cdot \dfrac{{\phi^\prime}^* \omega_F}{\omega_F^\prime} = p
$.
Taking valuations gives the result.
\end{proof}

Proposition \ref{prop:sumtovalp} yields two immediate corollaries.

\begin{corollary}
\label{cor:sumtovalp}
%If $\alpha_{\phi/F} = 1$, then $\alpha_{ \phi^\prime/F} = \N_{F/\QQ_p} (p)$.
If $\alpha_{\phi/F} = 1$, then $\alpha_{ \phi^\prime/F} = p^{\deg F}$.
\end{corollary}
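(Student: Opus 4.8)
The plan is to deduce this immediately from Proposition \ref{prop:sumtovalp}, passing between the multiplicative invariant $\alpha$ and the additive valuations it controls. Write $x = \dfrac{\phi^* \omega_F^\prime}{\omega_F}$ and $y = \dfrac{{\phi^\prime}^* \omega_F}{\omega_F^\prime}$.

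First I would rephrase the hypothesis additively. Part (i) of Proposition \ref{prop:sumtovalp} gives $x, y \in \cO_F$, so $\val_F(x), \val_F(y) \ge 0$; hence $\alpha_{\phi/F} = |x|_F^{-1} \ge 1$, with $\alpha_{\phi/F} = 1$ if and only if $x \in \cO_F^\times$, i.e. $\val_F(x) = 0$. Next I would invoke part (ii): $\val_F(x) + \val_F(y) = \val_F(p)$. Under the hypothesis $\alpha_{\phi/F} = 1$ the first summand vanishes, forcing $\val_F(y) = \val_F(p)$, and therefore
$$\alpha_{\phi^\prime/F} = |y|_F^{-1} = |p|_F^{-1} = p^{\deg F},$$
the last equality being the defining normalization of $|\cdot|_F$ (equivalently $|p|_F^{-1} = (\#\FF)^{\val_F(p)} = p^{[F:\QQ_p]}$, since the residue field of $F$ has $p^{[F:\QQ_p]/\val_F(p)}$ elements).

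There is no genuine obstacle here: all the content is already in Proposition \ref{prop:sumtovalp}, whose proof used $\phi^\prime \circ \phi = [p]$ to pin down $\val_F(x) + \val_F(y)$. The corollary is simply the observation that when $\phi$ contributes nothing to $\val_F(p)$, its dual must contribute all of it; the only point requiring care is bookkeeping the normalization so that $\val_F(y) = \val_F(p)$ translates to $\alpha_{\phi^\prime/F} = p^{\deg F}$ rather than some other power of $p$.
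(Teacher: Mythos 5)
Your proposal is correct and follows exactly the paper's argument: translate $\alpha_{\phi/F}=1$ into $\val_F\bigl(\phi^*\omega_F^\prime/\omega_F\bigr)=0$, apply Proposition \ref{prop:sumtovalp}(ii) to get $\val_F\bigl({\phi^\prime}^*\omega_F/\omega_F^\prime\bigr)=\val_F(p)$, and conclude via $|p|_F^{-1}=p^{\deg F}$. The extra remark on the normalization (that $(\#\FF)^{\val_F(p)}=p^{[F:\QQ_p]}$) is a fine bit of bookkeeping the paper leaves implicit, but it is the same proof.
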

\begin{proof}
If $\alpha_{\phi/F} = 1$, then we must have $\val_F \left (\dfrac{\phi^* \omega_F^\prime}{\omega_F} \right ) = 0$. By Proposition \ref{prop:sumtovalp}, we therefore get  $\val_F\left( \dfrac{{\phi^\prime}^* \omega_F}{\omega_F^\prime} \right ) = \val_F (p)$. The result follows since $|p|_F^{-1} = p^{\deg F}$.
\end{proof}

\begin{corollary}
\label{cor:oneeq1}
If $F$ is unramified, then one of $\alpha_{\phi/F}$ and $\alpha_{ \phi^\prime/F}$ is equal to $1$ and the other is equal to $p^{\deg F}$.
\end{corollary}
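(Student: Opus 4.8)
The plan is short: everything follows from Proposition~\ref{prop:sumtovalp} together with the observation that ``unramified'' pins down $\val_F(p)$. First I would record that since $F/\QQ_p$ is unramified, the normalized valuation on $F$ satisfies $\val_F(p) = 1$. By part~(i) of Proposition~\ref{prop:sumtovalp}, both $\val_F\!\left(\phi^* \omega_F^\prime/\omega_F\right)$ and $\val_F\!\left({\phi^\prime}^* \omega_F/\omega_F^\prime\right)$ are nonnegative integers, and by part~(ii) they sum to $\val_F(p) = 1$. Hence exactly one of them equals $0$ and the other equals $1$.

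Next I would convert this back into a statement about the invariants. By definition $\alpha_{\phi/F} = |\phi^* \omega_F^\prime/\omega_F|_F^{-1}$, and since $F$ is unramified a uniformizer of $\cO_F$ has normalized absolute value $|p|_F = p^{-\deg F}$, where $\deg F = [F:\QQ_p]$ is also the residue degree. Therefore an element of $F$ of valuation $0$ has $|\cdot|_F^{-1} = 1$, while one of valuation $1$ has $|\cdot|_F^{-1} = p^{\deg F}$. Combining with the previous step, one of $\alpha_{\phi/F}$ and $\alpha_{\phi^\prime/F}$ is $1$ and the other is $p^{\deg F}$, which is exactly the claim; note this is the symmetric strengthening of Corollary~\ref{cor:sumtovalp}, whose converse direction now also holds because the two valuations cannot both vanish when they must sum to $1$.

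There is no real obstacle here. The only point requiring care is the normalization bookkeeping --- that unramifiedness forces $\val_F(p) = 1$ and $|p|_F^{-1} = p^{\deg F}$ --- which is precisely the computation already invoked in the proof of Corollary~\ref{cor:sumtovalp}, so the argument is essentially one line beyond Proposition~\ref{prop:sumtovalp}.
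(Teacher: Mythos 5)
Your argument is correct and is essentially the paper's own proof: both use Proposition~\ref{prop:sumtovalp}(i)--(ii) together with $\val_F(p)=1$ in the unramified case to conclude that one valuation is $0$ and the other is $1$. The extra bookkeeping you include converting valuations to the values $1$ and $p^{\deg F}$ of $\alpha$ is the same computation the paper carries out in Corollary~\ref{cor:sumtovalp}, so there is no substantive difference.
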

\begin{proof}
By part (i) of Proposition \ref{prop:sumtovalp}, both $\val_F \left (\dfrac{\phi^* \omega_F^\prime}{\omega_F} \right )$ and $\val_F\left( \dfrac{{\phi^\prime}^* \omega_F}{\omega_F^\prime} \right )$ are non-negative. By part (ii) of Proposition \ref{prop:sumtovalp}, these sum to $\val_F(p)$, which is equal to $1$ since $F$ is unramified. As a result, one of $\val_F \left (\dfrac{\phi^* \omega_F^\prime}{\omega_F} \right )$ and $\val_F\left( \dfrac{{\phi^\prime}^* \omega_F}{\omega_F^\prime} \right )$ is $0$ and the other is $1$.
\end{proof}

\subsection{Minimal proper regular models}

We now turn to the case where $E/F$ has good reduction.

If $E/F$ has good reduction, then a minimal Weierstrass model for $E/F$ defines a minimal proper regular model $\mathcal{E}_{\cO_F}$ for $E/F$. The special fiber $\mathcal{E}_\FF$ is defined by the reduction modulo $\p$ of this minimal Weierstrass model.

The differential $\omega_F$ generates the space of global differentials on $E/F$, which is a one-dimensional $F$-vector space and the space of global differentials on $\mathcal{E}_{\cO_F}$,  which is a rank-one $\cO_F$-module. The reduction of $\omega_F$ modulo $\p$ to the minimal Weierstrass model for $E/F$ is non-trivial and generates the space of differentials on $\mathcal{E}_{\FF}$ as a one-dimensional $\FF$-vector space. We have a similar story for $E^\prime/F$.
%The quotient $\dfrac{\phi^* \omega_F^\prime}{\omega_F}$ measures how close the pullback of $\omega_F^\prime$ under $\phi$ comes to generating the global differentials on $E/F$.

Since $E/F$ has good reduction, the minimal proper regular model for $E/F$ and the Neron mininal model for $E/F$ coincide. As a consequence of the Neron universal mapping property, the isogeny $\phi:E \rightarrow E^\prime$ therefore induces an $\cO_F$-morphism $\phi_{\cO_F}: \mathcal{E}_{\cO_F} \rightarrow  \mathcal{E^\prime}_{\cO_F}$ on minimal proper regular models (see Exercise 4.24 in \cite{ATAEC}, for example). The restriction of $\phi_{\cO_F}$ to the special fiber $\mathcal{E}_{\FF}$ then yields an $\FF$-morphism $\phi_\FF: \mathcal{E}_{\FF} \rightarrow  \mathcal{E^\prime}_{\FF}$.
%
%$\phi_\FF: \widetilde{\mathcal{E}} \rightarrow  \widetilde{\mathcal{E^\prime}}$ 
%$\phi_\FF: \mathcal{E}_{\FF} \rightarrow  \mathcal{E^\prime}_{\FF}$
%on special fibers (see Exercise 4.24 in \cite{ATAEC}).
The invariant $\alpha_{\phi/F}$ measures how far the map $\phi_\FF$ is from being separable.

\begin{lemma}
\label{lem:sep}
If $E/F$ has good reduction, then $\phi_\FF: \mathcal{E}_{\FF} \rightarrow  \mathcal{E^\prime}_{\FF}$ is separable if and only if $\alpha_{\phi/F}~=~1$.
\end{lemma}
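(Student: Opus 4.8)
The plan is to translate separability of $\phi_\FF$ into a statement about the pullback map it induces on global differentials, and then to follow a single nonzero differential through reduction. Recall the standard fact that a nonconstant morphism $\psi\colon C \to C'$ of smooth projective geometrically connected curves over a field is separable if and only if the induced pullback map $\psi^*$ on spaces of global regular differentials is nonzero. Here, since $E/F$ has good reduction, $\mathcal{E}_\FF$ and $\mathcal{E^\prime}_\FF$ are elliptic curves over $\FF$, so their spaces of global differentials are one-dimensional over $\FF$, and $\phi_\FF$ is the reduction of the degree-$p$ isogeny $\phi$ (whose kernel is a finite flat $\cO_F$-group scheme of order $p$), hence is itself a nonconstant isogeny of degree $p$. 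Thus it suffices to show that $\phi_\FF^*(\bar\omega_F^\prime) \ne 0$ precisely when $\alpha_{\phi/F} = 1$, where $\bar\omega_F^\prime$ denotes the reduction of $\omega_F^\prime$ modulo $\p$.

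Next I would use that $\omega_F$ generates the rank-one free $\cO_F$-module of global differentials on $\mathcal{E}_{\cO_F}$, as recalled in the text. By part (i) of Proposition \ref{prop:sumtovalp}, $\phi^* \omega_F^\prime$ lies in this module, so we may write $\phi_{\cO_F}^* \omega_F^\prime = c\,\omega_F$ for a unique $c \in \cO_F$; by definition $c = \phi^*\omega_F^\prime / \omega_F$, so $\alpha_{\phi/F} = |c|_F^{-1}$, and in particular $\alpha_{\phi/F} = 1$ if and only if $c \in \cO_F^\times$, i.e.\ $\val_F(c) = 0$. Since $\phi_{\cO_F}$ is a morphism of $\cO_F$-schemes, the pullback map it induces on differentials is $\cO_F$-linear and compatible with the base change $\cO_F \to \FF$; reducing the identity $\phi_{\cO_F}^* \omega_F^\prime = c\,\omega_F$ modulo $\p$ therefore yields $\phi_\FF^*(\bar\omega_F^\prime) = \bar c\,\bar\omega_F$ in the space of differentials on $\mathcal{E}_\FF$, where $\bar c \in \FF$ is the image of $c$ and $\bar\omega_F$ is the nonzero generator of that space described in the text.

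Combining these observations, $\phi_\FF$ is separable if and only if $\phi_\FF^*(\bar\omega_F^\prime) \ne 0$, if and only if $\bar c \ne 0$, if and only if $\val_F(c) = 0$, if and only if $\alpha_{\phi/F} = 1$, which is the lemma. The one point that requires genuine care --- and the main, if modest, obstacle --- is the passage from $\phi_{\cO_F}^*$ to $\phi_\FF^*$: one must know that the $\cO_F$-module of global relative differentials of the smooth proper curve $\mathcal{E}_{\cO_F}/\cO_F$ is formed compatibly with base change to the special fiber (so that the reduction of $\omega_F$ really is the invariant differential generating the differentials on $\mathcal{E}_\FF$), and that $\phi_{\cO_F}^*$ on these modules reduces to $\phi_\FF^*$. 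This is a standard consequence of the smoothness and properness of $\mathcal{E}_{\cO_F}/\cO_F$ (flat base-change compatibility of $\Omega^1$ and of pushforward along the structure morphism). Alternatively, one can argue through formal groups: over $\cO_F$ the formal group $\hat E$ reduces to the formal group of $\mathcal{E}_\FF$, the power series $\Phi$ reduces to the formal-group homomorphism attached to $\phi_\FF$, and its linear coefficient $a_1$ --- which satisfies $\val_F(a_1) = \val_F(\phi^*\omega_F^\prime/\omega_F)$ --- reduces to the derivative of $\phi_\FF$ at the origin; translation invariance on $\mathcal{E}_\FF$ then promotes nonvanishing of that derivative to separability, giving the same equivalence.
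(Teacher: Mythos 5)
Your proof is correct and follows essentially the same route as the paper: both reduce separability of $\phi_\FF$ to nonvanishing of the pulled-back differential modulo $\p$ (via Proposition II.4.2(c) of \cite{AEC}) and then track the unit/non-unit status of $\phi^*\omega_F^\prime/\omega_F$ through reduction on the minimal model. Your explicit attention to the compatibility of $\phi_{\cO_F}^*$ with base change to the special fiber is the same step the paper handles implicitly in its chain of equivalences.
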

\begin{proof}
By Proposition II.4.2(c) in \cite{AEC}, $\phi_\FF$ will be separable if and only if $\phi_\FF^* \left(\omega_F^\prime \pmod{\p} \right) \ne 0$. We therefore wish to show that $\alpha_{\phi/F} = 1$ if and only if $\phi_\FF^* \left(\omega_F^\prime \pmod{\p} \right) \ne 0$.

%final condition that $\phi_\FF^* \left(\omega_F^\prime \pmod{\p} \right) \ne 0$ is equivalent to $\phi_\FF$ being separable, so the result follows.
%Since $E$ has good reduction, $\mathcal{E}_{\cO_F}$ of $E$ is given by an integral Weierstrass model having good reduction. The global differentials on $\mathcal{E}_{\cO_F}$ form a rank one $\cO_F$-module generated by $\omega_F$ and the restriction of $\omega_F$ to $\mathcal{E}_\FF$ given by $\omega_F \pmod{\p}$ generates the global differentials on the special fiber $\mathcal{E}_{\FF}$. We have a similar story for $E^\prime$. As a result, we have
Indeed, by the above discussion, we have 
\begin{multline*}
\alpha_{\phi/F} = 1
\Leftrightarrow  
\dfrac{\phi^* \omega_F^\prime}{\omega_F} \in \cO_F^\times 
\Leftrightarrow  
\phi_{\cO_F}^* \omega_F^\prime \text{ generates the global differentials on } \mathcal{E}_{\cO_F} \\
\Leftrightarrow  
\phi_{\cO_F}^* \omega_F^\prime  \pmod{\p} \text{ generates the global differentials on } \mathcal{E}_\FF\\
\Leftrightarrow  
\phi_{\cO_F}^* \omega_F^\prime  \pmod{\p} \ne 0
\Leftrightarrow  
\phi_\FF^* \left(\omega_F^\prime \pmod{\p} \right) \ne 0.
\end{multline*}
%By Proposition II.4.2(c) in \cite{AEC}, the final condition that $\phi_\FF^* \left(\omega_F^\prime \pmod{\p} \right) \ne 0$ is equivalent to $\phi_\FF$ being separable, so the result follows.
\end{proof}

%
%\begin{corollary}
%\label{cor:oneeq1}
%Suppose that $F/\QQ_p$ is unramified and let $\phi^\prime$ be the dual isogeny of $\phi$. Then exactly one of $\alpha_{\phi/F}$ and $\alpha_{ \phi^\prime/F}$ is equal to $1$ and the other is equal to $\N_{F/\QQ_p} (p)$.
%\end{corollary}
%\begin{proof}
%If $p = 3$, then $\alpha_{\phi,\Q_p}, \alpha_{\hat\phi,\Q_p} \in \Z$ and $\alpha_{\phi,\Q_p} \cdot \alpha_{\hat\phi,\Q_p} = 3$.
%By Lemma 4.2 in \cite{DD}, we have both $\dfrac{\phi^* \omega^\prime}{\omega},\dfrac{{\phi^\prime}^* \omega}{\omega^\prime} \in \cO_F$. We now apply the argument from the proof of Lemma 4.3 in \cite{DD}: since $\phi^\prime \circ \phi = [p]$, we have
%$
%\dfrac{\phi^* \omega^\prime}{\omega}\dfrac{{\phi^\prime}^* \omega}{\omega^\prime} = p.
%$.
%Since $F/\QQ_p$ is unramified, we therefore have $\val_F \left(\dfrac{\phi^* \omega^\prime}{\omega} \right) + \val_F\left(\dfrac{{\phi^\prime}^* \omega}{\omega^\prime}\right) = 1$.
%As both $\dfrac{\phi^* \omega^\prime}{\omega},\dfrac{{\phi^\prime}^* \omega}{\omega^\prime} \in \cO_F$, we therefore get that one of $\val_F \left(\dfrac{\phi^* \omega^\prime}{\omega} \right)$ and  $\val_F\left(\dfrac{{\phi^\prime}^* \omega}{\omega^\prime}\right)$ is equal to zero and the other is equal to one. The result then follows from the definition of $\alpha_{\phi/F}$
%\end{proof}

Combining Corollary \ref{cor:oneeq1} and Lemma \ref{lem:sep}, we then get:
\begin{corollary}
\label{cor:onesep}
If $F/\QQ_p$ 
%$F$
is unramified and $E/F$ has good reduction, then exactly one of $\phi_\FF$ and $\phi_\FF^\prime$ is separable.
\end{corollary}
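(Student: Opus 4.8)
The plan is to obtain this corollary immediately, by combining Corollary~\ref{cor:oneeq1} with two applications of Lemma~\ref{lem:sep}: one for $\phi$ and one for its dual $\phi^\prime$.

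First I would observe that since $\phi\colon E\to E^\prime$ is an isogeny and $E/F$ has good reduction, the curve $E^\prime/F$ also has good reduction. This is the standard fact that good reduction is an isogeny invariant over a local field (for instance via the criterion of N\'eron--Ogg--Shafarevich, or simply because isogenous curves have the same conductor). Consequently Lemma~\ref{lem:sep} applies verbatim with the roles of $E$ and $E^\prime$ interchanged and $\phi$ replaced by $\phi^\prime\colon E^\prime\to E$: the map $\phi_\FF^\prime$ is separable if and only if $\alpha_{\phi^\prime/F}=1$.

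Now, because $F/\QQ_p$ is unramified, Corollary~\ref{cor:oneeq1} asserts that exactly one of $\alpha_{\phi/F}$ and $\alpha_{\phi^\prime/F}$ equals $1$, the other being $p^{\deg F}$. Combining this with the two equivalences supplied by Lemma~\ref{lem:sep} --- that $\phi_\FF$ is separable if and only if $\alpha_{\phi/F}=1$, and that $\phi_\FF^\prime$ is separable if and only if $\alpha_{\phi^\prime/F}=1$ --- shows that exactly one of $\phi_\FF$ and $\phi_\FF^\prime$ is separable, as claimed.

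The argument is a purely formal combination of results already in hand, so I do not anticipate any genuine obstacle; the one step warranting an explicit word is the application of Lemma~\ref{lem:sep} to $\phi^\prime$, which is legitimate precisely because good reduction propagates between isogenous elliptic curves.
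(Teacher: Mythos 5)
Your proposal is correct and follows the paper's own (one-line) argument exactly: combine Corollary~\ref{cor:oneeq1} with Lemma~\ref{lem:sep} applied to both $\phi$ and $\phi^\prime$. Your explicit remark that $E^\prime/F$ has good reduction because good reduction is an isogeny invariant is a detail the paper leaves implicit, and it is a welcome addition rather than a deviation.
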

%\begin{proof
%This follows from Proposition \ref{cor:oneeq1} and the equivalence in Lemma \ref{lem:sep}.
%\end{proof}

In general however, it will not always be the case that one of $\phi_\FF$ and $\phi_\FF^\prime$ is separable.

\begin{proposition}
\label{prop:ssbothbad}
If $E/F$ has good supersingular reduction, then neither $\phi_\FF$ and $\phi_\FF^\prime$ is separable. As a result, neither $\alpha_{\phi/F}$ and $\alpha_{ \phi^\prime/F}$ is equal to $1$.
\end{proposition}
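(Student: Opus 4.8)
The plan is to work entirely on the special fiber and exploit the fact that multiplication by $p$ on a supersingular elliptic curve is purely inseparable, combined with the multiplicativity of separable degree.

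First I would note that, since $E/F$ has good reduction and $\phi\colon E\to E'$ is an isogeny, $E'/F$ also has good reduction, so that $\phi_\FF$ and $\phi_\FF^\prime$ are both defined as in the discussion preceding Lemma~\ref{lem:sep}. Reduction to the special fiber is functorial and compatible with composition of morphisms and with formation of dual isogenies; in particular $[p]\colon E\to E$ reduces to $[p]\colon\mathcal{E}_\FF\to\mathcal{E}_\FF$, and $\phi^\prime$ reduces to $\widehat{\phi_\FF}$. Since $\phi^\prime\circ\phi=[p]$, we obtain $\phi_\FF^\prime\circ\phi_\FF=[p]_{\mathcal{E}_\FF}$; and since reduction preserves the degree of an isogeny (it sends $\phi_\FF\circ\widehat{\phi_\FF}$ to $[\deg\phi]_{\mathcal{E}_\FF^\prime}$; see also \cite{AEC}), we have $\deg\phi_\FF=\deg\phi=p$ and likewise $\deg\phi_\FF^\prime=p$. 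In particular $\phi_\FF$ and $\phi_\FF^\prime$ are both nonconstant, hence genuine isogenies.

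Next I would invoke the standard characterization of supersingularity: because $\mathcal{E}_\FF$ is a supersingular elliptic curve over the characteristic-$p$ field $\FF$, we have $\mathcal{E}_\FF[p](\overline{\FF})=\{0\}$, so $[p]_{\mathcal{E}_\FF}$ has separable degree $1$, i.e.\ it is purely inseparable. Separable degree is multiplicative under composition, so
$$\deg_s\big(\phi_\FF^\prime\big)\cdot\deg_s\big(\phi_\FF\big)=\deg_s\big([p]_{\mathcal{E}_\FF}\big)=1,$$
and as both factors are positive integers we conclude $\deg_s\phi_\FF=\deg_s\phi_\FF^\prime=1$. Since $\deg\phi_\FF=p>1=\deg_s\phi_\FF$, the isogeny $\phi_\FF$ is not separable, and by the same reasoning $\phi_\FF^\prime$ is not separable. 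Finally, Lemma~\ref{lem:sep} converts this into the statement about the invariant: as $\phi_\FF$ is not separable, $\alpha_{\phi/F}\ne 1$, and as $\phi_\FF^\prime$ is not separable, $\alpha_{\phi^\prime/F}\ne 1$.

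The only points requiring care are bookkeeping ones about reduction: that the reduction of the N\'eron-model morphism genuinely respects the relation $\phi^\prime\circ\phi=[p]$ and has the same degree as $\phi$ (so that, in particular, it is not an isomorphism), together with the standard fact that $[p]$ is purely inseparable on a supersingular curve. The main obstacle, such as it is, lies precisely in pinning down $\deg\phi_\FF=p$ rather than merely knowing $\phi_\FF\ne 0$; once that is in hand, everything reduces to multiplicativity of separable degree and an application of Lemma~\ref{lem:sep}.
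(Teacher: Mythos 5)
Your proof is correct and follows essentially the same route as the paper: reduce $\phi^\prime\circ\phi=[p]$ to the special fiber, use that $[p]_{\mathcal{E}_\FF}$ is purely inseparable on a supersingular curve so that neither factor can be separable, and then invoke Lemma~\ref{lem:sep}. The only difference is that you make explicit the bookkeeping that reduction preserves degrees (so $\deg\phi_\FF=p>1$ and pure inseparability of the factors really rules out separability), a point the paper's terser proof leaves implicit.
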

\begin{proof}
Since $E/F$ has supersingular reduction, the map
%$\widetilde{[p]}:\widetilde{\mathcal{E}}  \rightarrow \widetilde{\mathcal{E}}$
$[p]_{\mathcal{E}_\FF}:\mathcal{E}_\FF  \rightarrow \mathcal{E}_\FF$
is purely inseparable (see Theorem V.3.1 in \cite{AEC}, for example). Since $[p]_{\mathcal{E}_\FF} = \phi_\FF^\prime \circ \phi_\FF$, neither $\phi_\FF$ nor $\phi_\FF^\prime$ can be separable. By Lemma \ref{lem:sep}, we therefore get that neither of $\alpha_{\phi/F}$ and $\alpha_{ \phi^\prime/F}$ is equal to $1$.
\end{proof}

\begin{corollary}
\label{cor:noss}
If $E/F$ has good supersingular reduction, then $F/\QQ_p$
%$F$ 
must be ramified.
\end{corollary}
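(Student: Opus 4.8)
The plan is a short argument by contradiction that simply collides Proposition \ref{prop:ssbothbad} with Corollary \ref{cor:oneeq1}. Assume, for contradiction, that $F/\QQ_p$ is unramified. The $p$-isogeny $\phi$ is defined over $K$, so it base-changes to a $p$-isogeny $E/F \to E^\prime/F$, and the hypotheses of Corollary \ref{cor:oneeq1} are met; that corollary then forces exactly one of $\alpha_{\phi/F}$ and $\alpha_{\phi^\prime/F}$ to equal $1$. On the other hand, $E/F$ has good supersingular reduction by hypothesis, so Proposition \ref{prop:ssbothbad} asserts that \emph{neither} $\alpha_{\phi/F}$ nor $\alpha_{\phi^\prime/F}$ equals $1$. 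These two statements are incompatible, so $F/\QQ_p$ must in fact be ramified.

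The only bookkeeping point worth flagging is the meaning of ``unramified'' in Corollary \ref{cor:oneeq1}: the proof of that corollary rests on the identity $\val_F(p) = e(F/\QQ_p)$, which equals $1$ precisely when $F/\QQ_p$ is unramified, so its hypothesis matches exactly the conclusion we are after. Apart from this observation, no computation is required, since the statement is a purely formal consequence of the two results established just above.

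Accordingly, I expect no real obstacle here, and I would keep the written proof to two or three sentences: suppose $F/\QQ_p$ is unramified, invoke Corollary \ref{cor:oneeq1} to get that one of $\alpha_{\phi/F}$, $\alpha_{\phi^\prime/F}$ is $1$, note that this contradicts Proposition \ref{prop:ssbothbad}, and conclude that $F/\QQ_p$ is ramified.
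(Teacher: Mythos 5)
Your proof is correct and follows essentially the paper's own argument: the paper derives the same contradiction, citing Corollary \ref{cor:onesep} (exactly one of $\phi_\FF$, $\phi_\FF^\prime$ separable) against Proposition \ref{prop:ssbothbad}, while you cite Corollary \ref{cor:oneeq1} together with the $\alpha$-formulation of that same proposition --- an equivalent phrasing via Lemma \ref{lem:sep}.
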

\begin{proof}
If $F/\QQ_p$
%$F$
were unramified, then this would cause a contradiction between Corollary \ref{cor:onesep} and Proposition \ref{prop:ssbothbad}.
\end{proof}

\section{Proofs of Theorems}

The core idea of the proof of Theorem \ref{thm:mainthm} is to examine what happens when we base change $E$ to an extension $L/K$ where $E$ obtains good reduction.
%If $L/K$ is a finite extension, we will let $E/L$ denote the base change $E \otimes_K L$.
%We will denote the Neron differential of $E/K$ by $\omega_K$ and the Neron differential of $E/L$ by $\omega_L$

\begin{lemma}
\label{lem:diffchangetoL}
We have $ \omega_L = u \cdot \omega_K$, where $$\val_L (u) = \frac{\val_{L} (\Deltam(E/K)) - \val_{L}( \Deltam(E/L))}{12}.$$  
\end{lemma}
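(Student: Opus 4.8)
The plan is to compare minimal Weierstrass models of $E/K$ and $E/L$ after base change. Let $W_K$ be a minimal Weierstrass model for $E/K$ with invariant differential $\omega_K$ and discriminant $\Delta_K$. Viewed over $L$, $W_K$ is a (generally non-minimal) Weierstrass model for $E/L$, with the same differential $\omega_K$ and discriminant $\Delta_K$, where now $\val_L(\Delta_K) = e(L/K)\val_K(\Delta_K) = \val_L(\Deltam(E/K))$ (interpreting $\Deltam(E/K)$ as an element of $L$ via the inclusion $K \hookrightarrow L$). Let $W_L$ be a minimal Weierstrass model for $E/L$ with differential $\omega_L$ and discriminant $\Deltam(E/L)$.

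Next I would invoke the standard change-of-coordinates formulas for Weierstrass equations (as in Chapter III of \cite{AEC} or \cite{ATAEC}): any two Weierstrass models of the same elliptic curve over $L$ are related by a substitution involving some $u \in L^\times$, under which the invariant differential scales by $u$ (so $\omega_K = u \cdot \omega_L$, i.e. $\omega_L = u^{-1}\omega_K$) and the discriminant scales by $u^{12}$ (so $\Delta_K = u^{12}\,\Deltam(E/L)$). Taking $\val_L$ of the discriminant relation gives
$$\val_L(\Deltam(E/K)) = 12\,\val_L(u) + \val_L(\Deltam(E/L)),$$
hence $\val_L(u) = \bigl(\val_L(\Deltam(E/K)) - \val_L(\Deltam(E/L))\bigr)/12$. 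Replacing $u$ by $u^{-1}$ (which only changes the sign, absorbed into the statement's orientation $\omega_L = u\cdot\omega_K$) yields exactly the claimed formula; I would simply fix the convention so the stated equality holds on the nose.

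I do not anticipate a genuine obstacle here, as this is essentially bookkeeping with the transformation formulas; the only subtlety worth a sentence is the identification of $\Deltam(E/K)$, a priori an element of $K$, with its image in $L$, so that $\val_L(\Deltam(E/K))$ makes sense and equals $e(L/K)\vm(E/K)$ — and correspondingly that $12 \mid \val_L(\Deltam(E/K)) - \val_L(\Deltam(E/L))$, which follows automatically since this difference equals $12\val_L(u)$ with $u \in L^\times$. One should also note that the choice of minimal differentials $\omega_K$, $\omega_L$ is only well-defined up to units of $\cO_K^\times$, $\cO_L^\times$ respectively, so $\val_L(u)$ is the only well-defined quantity, and the lemma is really an assertion about that valuation.
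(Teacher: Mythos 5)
Your strategy is exactly the paper's: view the $K$-minimal Weierstrass model over $L$, relate it to an $L$-minimal model by a standard coordinate change with parameter $u$, and take $\val_L$ of the discriminant relation. The genuine problem is that the pair of transformation rules you quote is mismatched, and your proposed repair does not repair it. For a coordinate change with parameter $u$, the new differential is $u$ times the old while the new discriminant is $u^{-12}$ times the old (equivalently, $\omega^{\otimes 12}\otimes\Delta$ is invariant). So with a single $u$ you cannot have both $\omega_K = u\,\omega_L$ and $\Deltam(E/K) = u^{12}\,\Deltam(E/L)$; the consistent pairing, for the change taking the $K$-minimal model (viewed over $L$) to the $L$-minimal model, is $\omega_L = u\,\omega_K$ together with $\Deltam(E/K) = u^{12}\,\Deltam(E/L)$ --- which is precisely what the paper writes, and then the stated formula for $\val_L(u)$ falls out with no inversion at all.

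As written, your final step --- derive $\val_L(u) = \bigl(\val_L(\Deltam(E/K)) - \val_L(\Deltam(E/L))\bigr)/12$ from the (mismatched) discriminant relation and then ``replace $u$ by $u^{-1}$, which only changes the sign'' --- would, taken literally, give $\val_L(u) = \bigl(\val_L(\Deltam(E/L)) - \val_L(\Deltam(E/K))\bigr)/12$, the negative of the claim. This sign is not a disposable convention: it is the substance of the lemma, since Corollary \ref{cor:diffchangetogood} needs $\val_L(u) \ge 0$ (so that $u \in \cO_L$), and the direction of the inequalities in Theorems \ref{thm:mainthm} and \ref{thm:ramthm} ultimately rests on it. The fix is one line: quote the two transformation formulas in matched directions and drop the inversion. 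Your other remarks (identifying $\Deltam(E/K)$ with its image in $L$ so that $\val_L(\Deltam(E/K)) = e(L/K)\vm(E/K)$, and that only $\val_L(u)$ is well defined since minimal differentials are unique up to units) are correct and consistent with the paper.
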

\begin{proof}
We may assume that $\omega_K$ is given by the invariant differential on a minimal Weierstrass model of $E/K$. A minimal Weierstrass model for $E/L$ is then obtained via a coordinate change $(x,y) \mapsto \left( \frac{(x-r)}{u^2}, \frac{y - s(x-r) + t}{u^3} \right)$ for appropriate values of $u$, $s$, $r$, and $t$ in  $L$.

The differential $\omega_L$ is then the given by the invariant differential on this minimal model, which is equal to $u \cdot \omega_K$. The relationship between $\Deltam(E/K)$ and $\Deltam(E/L)$ is given by $u^{12}\Deltam(E/L) = \Deltam(E/K)$, so the result follows from taking valuations.
\end{proof}

\begin{corollary}
\label{cor:diffchangetogood}
If $E/K$ has obtains good reduction over $L$, then  $ \omega_L = u \cdot \omega_K$ for some $u \in \cO_L$ with $\val_L (u) = \frac{e(L/K)\vm(E/K)}{12}.$
\end{corollary}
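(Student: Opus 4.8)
The plan is to deduce this immediately from Lemma~\ref{lem:diffchangetoL}. That lemma already gives $\omega_L = u \cdot \omega_K$ with $\val_L(u) = \frac{1}{12}\bigl(\val_L(\Deltam(E/K)) - \val_L(\Deltam(E/L))\bigr)$, so all that remains is to evaluate the two valuations on the right under the good-reduction hypothesis.

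First I would use that $E/L$ has good reduction: a minimal Weierstrass model for $E/L$ then has discriminant a unit of $\cO_L$, so $\val_L(\Deltam(E/L)) = 0$. Next I would observe that $\Deltam(E/K)$ is an element of $K$ (indeed of $\cO_K$), and that the normalized valuation of an element of the base field scales by the ramification index under base change, so $\val_L(\Deltam(E/K)) = e(L/K)\,\val_K(\Deltam(E/K)) = e(L/K)\,\vm(E/K)$. Substituting these into the formula from Lemma~\ref{lem:diffchangetoL} yields $\val_L(u) = \frac{e(L/K)\,\vm(E/K)}{12}$.

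Finally, to justify that $u$ may be taken in $\cO_L$ rather than merely in $L^\times$, note that $\vm(E/K) = \val_K(\Deltam(E/K)) \geq 0$ and $e(L/K) \geq 1$, so the valuation just computed is non-negative and hence $u \in \cO_L$. I do not anticipate any real obstacle here: the only two points needing a moment's care are the standard facts that good reduction is equivalent to the minimal discriminant being a unit (so its valuation vanishes) and that valuations of base-field elements scale by $e(L/K)$.
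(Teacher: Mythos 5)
Your proposal is correct and follows essentially the same route as the paper: apply Lemma~\ref{lem:diffchangetoL}, use good reduction over $L$ to kill $\val_L(\Deltam(E/L))$, rescale $\val_K$ to $\val_L$ by the factor $e(L/K)$, and conclude $u \in \cO_L$ from non-negativity of the resulting valuation. No gaps.
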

\begin{proof}
Since $E/L$ has good reduction, we have $\val_{L} (\Deltam(E/L)) = 0$. By Lemma \ref{lem:diffchangetoL}, we therefore have $\val_L (u) = \frac{\val_{L} (\Deltam(E/K))}{12}.$ The result about $\val_L(u)$ then follows since $\val_{L}(\pi_K) = e(L/K)$ for any uniformizer $\pi$ of $\cO_K$. As $\val_L(u)$ is non-negative, we have $u \in \cO_L$.
\end{proof}

We are now able to prove Theorem \ref{thm:ramthm}.

\begin{proof}[Proof of Theorem \ref{thm:ramthm}]
Let $L/K$ be an extension over which $E$ has good reduction. By Corollary \ref{cor:diffchangetogood},  we then have $\frac{\phi^* \omega^\prime_L}{\omega_L} = \frac{u^\prime}{u} \frac{\phi^* \omega ^\prime_K}{\omega_K}$ for some $u, u^\prime \in \cO_L$ with $\val_L (u) = \frac{e(L/K)\vm(E/K)}{12}$ and  $\val_L (u^\prime) =\frac{e(L/K)\vm(E^\prime/K)}{12}$.

As a result, we have
\begin{equation}
\label{eq:valalphaKalphaL}
\val_L \left (\frac{\phi^* \omega^\prime_L}{\omega_L} \right ) = e(L/K) \left ( \val_K \left (\frac{\phi^* \omega^\prime_K}{\omega_K} \right) + \frac{\vm(E^\prime/K) - \vm(E/K)}{12} \right ).
\end{equation}
If $\alpha_{\phi/K} = 1$, then $\val_K \left (\frac{\phi^* \omega^\prime_K}{\omega_K} \right) = 0$, so $\val_L \left (\frac{\phi^* \omega^\prime_L}{\omega_L} \right ) = e(L/K)\frac{\vm(E^\prime/K) - \vm(E/K)}{12}$. However, since $E/L$ has supersingular reduction, we know by Proposition \ref{prop:ssbothbad} that $\val_L \left (\frac{\phi^* \omega^\prime_L}{\omega_L} \right )> 0$. As a result, we must have $\vm(E/K) < \vm(E^\prime/K)$. The fact that $m(E/K) < m(E^\prime/K)$ then follows from Ogg's formula as explained in Remark \ref{rem:Ogg}. This proves (i).

To prove (ii), we observe that if %$\alpha_{\phi/K} = \N_{K/\QQ_p}(p)$,
$\alpha_{\phi/K} = p^{\deg K}$, then by Corollary \ref{cor:sumtovalp}, we must have $\alpha_{\phi^\prime/K} = 1$. Exchanging the roles of $E$ and $E^\prime$ and applying (i) then yields the result.
\end{proof}

Theorem \ref{thm:mainthm} now follows almost immediately.
\begin{proof}[Proof of Theorem \ref{thm:mainthm}]
By Corollary \ref{cor:oneeq1}, one of $\alpha_{\phi/K}$ and $\alpha_{\phi^\prime/K}$ is equal to $1$ and the other is equal to %$\N_{K/\QQ_p}(p)$.
$p^{\deg K}$. Theorem \ref{thm:mainthm} then follows from Theorem \ref{thm:ramthm}.
\end{proof}


\begin{thebibliography}{99}
\bibitem{AEC} J. Silverman. \textit{The arithmetic of elliptic curves}.  Vol. 106. Springer, (1986).
\bibitem{ATAEC} J. Silverman. \textit{Advanced topics in the arithmetic of elliptic curves}. Vol. 151. Springer, (1994).
\bibitem{BKLS} M. Bhargava, Z. Klagsbrun, R. Lemke Oliver, and A. Shnidman. \textit{Average sizes of Selmer groups in families of quadratic twists with a 3-isogeny}. preprint.
\bibitem{DD} T. Dokchitser and V. Dokchiter. \textit{Local invariants of isogenous elliptic curves}. Transactions of the American Mathematical Society. Vol. 367.6 (2015) : 4339--4358.
\bibitem{Schaefer} E. Schaefer. \textit{Class groups and Selmer groups}. J. Number Theory. Vol. 56.1 (1996) : 79 --114.
\end{thebibliography}
\end{document}